\def\CC{{\mathbb C}}
\def\PP{{\mathbb P}}
\def\RR{{\mathbb R}}
\def\TT{{\mathbb T}}
\def\Qbar{\overline{\mathbb Q}}
\def\0{{\mathbf 0}}
\def\1{{\mathbf 1}}
\def\Gal{\mathrm{Gal}}
\def\min{\mathrm{min}}
\newtheorem{thm}{Theorem}
\newtheorem{lemma}[thm]{Lemma}
\newtheorem*{thm*}{Theorem}
\newtheorem*{alg*}{Algorithm}
\newtheorem*{lemma*}{Lemma}
\theoremstyle{remark}
\newtheorem*{rmk*}{Remark}
\newtheorem*{notation*}{Notation}
\newtheorem{example}[thm]{Example}
\newtheorem*{example*}{Example}
\theoremstyle{definition}
\newtheorem*{defn*}{Definition}
\newcommand{\mybf}{\mathbb}
\newcommand{\bP}{\mybf{P}}
\newcommand{\bR}{\mybf{R}}
\newcommand{\bC}{\mybf{C}}
\newcommand{\bZ}{\mybf{Z}}
\newcommand{\bQ}{\mybf{Q}}
\newcommand{\al}{\alpha}
\providecommand{\abs}[1]{\lvert#1\rvert}
\newcommand{\ON}[1]{\operatorname{#1}}
\newcommand{\ra}{\rightarrow}
\newcommand{\logabs}[1]{\log\,\abs{#1}}
\def\talltareesidedbox#1{\setbox0=\hbox{$#1$}\dimen0=\wd0 \advance\dimen0 by3pt\rlap{\hbox{\vrule height10pt width.4pt
 depth2pt \kern-.4pt\vrule height10.4pt width\dimen0 depth-10pt\kern-.4pt \vrule height10pt width.4pt depth2pt}}
 \relax \hbox to\dimen0{\hss$#1$\hss}}
\def\tareesidedbox#1{\setbox0=\hbox{$#1$}\dimen0=\wd0 \advance\dimen0 by3pt\rlap{\hbox{\vrule height8pt width.4pt
 depth2pt \kern-.4pt\vrule height8.4pt width\dimen0 depth-8pt\kern-.4pt \vrule height8pt width.4pt depth2pt}}
\relax \hbox to\dimen0{\hss$#1$\hss}}
\def\shorttareesidedbox#1{\setbox0=\hbox{$#1$}\dimen0=\wd0 \advance\dimen0 by3pt\rlap{\hbox{\vrule height7pt width.4pt
 depth2pt \kern-.4pt\vrule height7.4pt width\dimen0 depth-7pt\kern-.4pt \vrule height7pt width.4pt depth2pt}}
 \relax \hbox to\dimen0{\hss$#1$\hss}}
\title[Height bounds]{Heights bounds for algebraic numbers satisfying splitting conditions}
\author[Fili]{Paul Fili}
\address{Department of Mathematics\\ Oklahoma State University, Stillwater, OK 74078}
\email{paul.fili@okstate.edu}
\author[Pritsker]{Igor Pritsker}
\thanks{Research of Igor Pritsker was partially supported by the National Security Agency (grant H98230-15-1-0229).}
\address{Department of Mathematics\\ Oklahoma State University, Stillwater, OK 74078}
\email{igor@math.okstate.edu}
\subjclass[2010]{11G50, 11R06, 37P30}
\keywords{Energy integrals, Weil height, totally real, totally $p$-adic.}
\date{\today}
\begin{document}

\begin{abstract}
 In an earlier work, the first author and Petsche solved an energy minimization problem for local fields and used the result to obtain lower bounds on the height of algebraic numbers all whose conjugates lie in various local fields, such as totally real and totally $p$-adic numbers. In this paper, we extend these techniques and solve the corresponding minimization programs for real intervals and $p$-adic discs, obtaining several new lower bounds for the height of algebraic numbers all of whose conjugates lie in such sets.
\end{abstract}

\maketitle

\allowdisplaybreaks[2]

\section{Introduction}

Let $h$ denote the absolute logarithmic Weil height on algebraic numbers. It is well-known that if an $\al\in\Qbar$ satisfies some sort of prime ideal splitting conditions, one can compute lower bounds on the height of the number. The first result of this kind dates back to Schinzel \cite{SchinzelTotReal}, who proved that if $\al\neq 0,\pm 1$ is totally real, that is, if the set of Galois conjugates of $\al$ lies entirely in $\bR$, then
\[
 h(\al) \geq \frac12 \log \bigg(\frac{1+\sqrt{5}}{2}\bigg).
\]
Bombieri and Zannier \cite{BombieriZannierNote} proved a similar result for totally $p$-adic numbers, specifically, if $L_S$ denotes the field of all numbers whose conjugates lie in the $p$-adic field $\bQ_p$ for all $p$ in a set $S$ of non-archimedean rational primes, then
\[
 \liminf_{\al\in L_S} h(\al) \geq \frac 12 \sum_{p\in S} \frac{\log p}{p+1}.
\]
(Bombieri and Zannier proved similar results for finite extensions of $\bQ_p$, but for simplicity we will first state all results for the moment in the totally $p$-adic setting.)

Using potential theoretic techniques on the Berkovich projective line, the first author and Petsche \cite{F-P-EIOLF} managed to improve on these results at the non-archimedean places and to allow simultaneous $p$-adic and totally real splitting conditions. Specifically, they proved \cite[Theorem 3]{F-P-EIOLF} that if $S$ is a nonempty subset of rational primes, and $L_S$ denotes the subfield of $\Qbar$ consisting of all those $\alpha\in\Qbar$ such that $\alpha$ is totally $p$-adic for all primes $p\in S$, and $\alpha$ is totally real if $\infty\in S$, then
\begin{equation}\label{SimpleGlobalThmBound}
\liminf_{\alpha\in L_S}h(\alpha)\geq
\begin{cases}
\displaystyle \frac{1}{2} \sum_{p\in S} \frac{p\log p}{p^2-1} & \text{if }\infty\not\in S\\
\displaystyle \frac{1}{2} \sum_{\substack{p\in S\\ p\nmid\infty}} \frac{p\log p}{p^2-1}  + \frac{7\zeta(3)}{4\pi^2} & \text{if }\infty\in S.
\end{cases}
\end{equation}

In this paper, we use similar potential theoretic techniques to extend the results of the first author and Petsche to the case where all of the conjugates of a number lie in a specific real or $p$-adic interval or ball, respectively. 

Before stating our results, let us first set some notation. For $L$ a local field with a given absolute value $\abs{\cdot}$ and $\mu$ a Borel measure on $\bP^1(L)$, we let
\begin{equation}\label{eqn:energy-integral}
I_\delta(\nu)=\iint_{\PP^1(L)\times\PP^1(L)}-\log\delta(x,y)\,d\nu(x)\,d\nu(y),
\end{equation}
where $\delta:\PP^1(L)\times\PP^1(L)\to\RR$ is the spherical metric defined by 
\begin{equation*}
\delta(x,y) =\frac{|x_0y_1-y_0x_1|}{\max\{|x_0|,|x_1|\}\max\{|y_0|,|y_1|\}}
\end{equation*}
for $x=(x_0:x_1)$ and $ y=(y_0:y_1)$ in $\PP^1(L)$. When $L$ is a finite extension of $\bQ_p$ we will take as the absolute value on $L$ the unique extension of the $p$-adic absolute $\abs{\cdot}_p$ to $L$, normalized so that $\abs{p}_p = 1/p$, and when $L=\bR$ or $L=\bC$, we will take the usual absolute value $\abs{\cdot}_\infty=\abs{\cdot}$. Let us denote by
\[
 V_\delta(E_p) = \inf_{\nu} I_\delta(\nu)
\]
this infimum over Borel probability measures supported on $E_p$, called the \emph{$\delta$-Robin constant} of $E_p$. As was demonstrated in \cite{F-P-EIOLF}, for compact sets $E_p\subset \bP^1(\bQ_p)$ there is a unique measure $\mu_{E_p}$, which minimizes the energy functional $I_\delta$ and is supported on $E_p$, that is, for which $V_\delta(E_p) = I_\delta(\mu_{E_p})$. We have the following result, which is a scholium of Theorem 3 of \cite{F-P-EIOLF}:
\begin{thm}\label{thm:global}
Let $S$ be a finite set of rational primes, and for each $p\in S$, let $L_p/\bQ_p$ be a finite normal extension and $E_p$ a closed subset of the projective line $\bP^1(L_p)$. Let $G=\Gal(\Qbar/\bQ)$ denote the absolute Galois group, and 
\[
 A_S = \{\al\in \Qbar : G\al \subset E_p\text{ for every }p\in S\}.
\]
Then
\begin{equation}\label{eqn:lower-bound}
 \liminf_{\al\in A_S} h(\al) \geq \frac{1}{2} \sum_{p\in S} V_\delta(E_p),
\end{equation}
where $V_\delta(E_p)$ is the $\delta$-Robin constant as defined above. Further, if there exists a sequence $\{\alpha_n\}_{n=1}^\infty\subset A_S$ such that the infimum above is attained, then for each $p\in S$, the probability measures on $\bP^1(\bC_p)$ distributed equally on each Galois conjugate of $\alpha_n$ must converge weakly to the unique $\delta$-equilibrium measure of $E_p$.
\end{thm}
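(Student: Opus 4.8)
The plan is to run the global energy-minimization argument of \cite[Theorem 3]{F-P-EIOLF}, observing that every step survives when the ambient local space $\bP^1(L_p)$ is replaced by an arbitrary closed, hence compact, subset $E_p$. First I would record the adelic decomposition of the Weil height into local $\delta$-energies: for $\al\in A_S$ of degree $d\geq 2$ with conjugates $\al_1,\dots,\al_d$, the elementary identity $-\log\delta_v(\al_i,\al_j)=-\log\abs{\al_i-\al_j}_v+\log^+\abs{\al_i}_v+\log^+\abs{\al_j}_v$ at each place $v$ of $\bQ$, combined with the product formula applied to the discriminant and the leading coefficient of the minimal polynomial of $\al$ (and the defining sum formula for $h$), gives
\[
 h(\al)=\frac12\sum_v\frac{1}{d(d-1)}\sum_{i\neq j}-\log\delta_v(\al_i,\al_j),
\]
where at each place the conjugates are taken inside $\bP^1(\bC_v)$. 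Then I would bound the local terms below: at a finite place $v\notin S$ the ultrametric inequality forces $\delta_v\leq 1$, so that term is nonnegative; at the archimedean place, when $\infty\notin S$, I would invoke $V_\delta(\bP^1(\bC))=0$ (established in \cite{F-P-EIOLF}) together with the semicontinuity argument below to conclude that the archimedean contribution is $\geq 0$ in the limit.

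The crux is the analysis at the places $p\in S$. By a standard reduction via Northcott's theorem it suffices to treat a sequence $\{\al_n\}\subset A_S$ of distinct elements with $d_n=\deg\al_n\to\infty$ and $h(\al_n)$ tending to $\liminf_{\al\in A_S}h(\al)$. For each $p\in S$ the conjugate measure $[\al_n]_p=\tfrac1{d_n}\sum_i\delta_{\al_i}$ is a probability measure supported on the compact set $E_p$, so after passing to a further subsequence (there are finitely many $p$) one may assume $[\al_n]_p\to\mu_p$ weakly for every $p\in S$, with $\mu_p$ a probability measure on $E_p$, and likewise that the archimedean measures converge weakly on $\bP^1(\bC)$. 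The off-diagonal empirical measures $\tfrac1{d_n(d_n-1)}\sum_{i\neq j}\delta_{(\al_i,\al_j)}$ on $E_p\times E_p$ then converge weakly to $\mu_p\otimes\mu_p$, and since $-\log\delta_p$ is lower semicontinuous and bounded below on the compact product $E_p\times E_p$, the principle of descent gives
\[
 \liminf_n\ \frac{1}{d_n(d_n-1)}\sum_{i\neq j}-\log\delta_p(\al_i,\al_j)\;\geq\;I_\delta(\mu_p)\;\geq\;V_\delta(E_p).
\]
Feeding these lower bounds into the height identity and summing over $v$ yields $\liminf_{\al\in A_S}h(\al)\geq\tfrac12\sum_{p\in S}V_\delta(E_p)$.

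For the final assertion, if the infimum is attained along $\{\al_n\}$ then every inequality in the preceding chain must become an equality in the limit; in particular $\tfrac1{d_n(d_n-1)}\sum_{i\neq j}-\log\delta_p(\al_i,\al_j)\to I_\delta(\mu_p)=V_\delta(E_p)$ for each $p\in S$. Hence every weak subsequential limit $\mu_p$ of $\{[\al_n]_p\}$ is an energy-minimizing probability measure on $E_p$, and therefore coincides with the unique $\delta$-equilibrium measure $\mu_{E_p}$ — uniqueness being the content of \cite{F-P-EIOLF} when $E_p\subset\bP^1(\bQ_p)$, and classical, via strict convexity of the logarithmic energy, in the archimedean case. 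Since every subsequence of $\{[\al_n]_p\}$ then admits a further subsequence converging to this same $\mu_{E_p}$, the full sequence converges to it weakly.

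The step I expect to demand the most care — and the only place where one must check that passing from $\bP^1(L_p)$ to $E_p$ costs nothing — is the descent argument: the diagonal of $E_p\times E_p$ carries infinite $(-\log\delta)$-energy, so one cannot take weak limits of $[\al_n]_p\otimes[\al_n]_p$ against the kernel directly, and must instead work with the off-diagonal empirical measures, exploiting lower semicontinuity together with the boundedness below of $-\log\delta_p$ on the \emph{compact} product $E_p\times E_p$. Since $E_p$ is closed in the compact space $\bP^1(L_p)$ it is itself compact, these hypotheses hold exactly as in \cite{F-P-EIOLF}, and the argument there transfers verbatim — which is precisely the sense in which this theorem is a scholium of \cite[Theorem 3]{F-P-EIOLF}.
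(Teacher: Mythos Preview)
The paper does not supply its own proof but explicitly labels this theorem a scholium of \cite[Theorem~3]{F-P-EIOLF}, and your sketch faithfully reconstructs that argument with the single change that the ambient $\bP^1(L_p)$ is replaced by the closed (hence compact) subset $E_p$. The adelic height identity via the product formula, the nonnegativity at places outside $S$, the descent step through lower semicontinuity of $-\log\delta$ on the compact product $E_p\times E_p$, and the uniqueness argument for the equality case are all exactly as in \cite{F-P-EIOLF}, so your proposal is correct and matches the paper's intended approach.
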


We note that it is very easy to see, using Rumely's Fekete-Szeg\H{o} theorem with splitting conditions \cite{RumelyFeketeI,RumelyFeketeII}, that in many cases of interest $A_S$ is infinite, so that the result above is non-trivial. In particular, if $E_p=\bQ_p$ for some finite prime $p$ and every other $E_p$ contains at least a disc or interval, then $A_S$ is infinite.

We will prove our main theorem in this paper by determining the value of $V_\delta(E_p)$ for certain real intervals and $p$-adic discs of interest. Specifically, we prove the following:
\begin{thm} \label{segment}
Let $E_\infty = [-r,r]$. The equilibrium measure is absolutely continuous with respect to Lebesgue measure on $[-r,r]$, 
\[
\frac{d\mu_{E_\infty}}{dx}(x) = \begin{cases}
                                 \displaystyle \frac{2\arcsin(1/r)}{\pi^2 \sqrt{r^2-x^2}} + \frac{1}{\pi^2 x} \log\left|\frac{(x+1)(r^2-x+\sqrt{r^2-x^2}\sqrt{r^2-1})}{(x-1)(r^2+x+\sqrt{r^2-x^2} \sqrt{r^2-1})}\right|&\text{if }r\geq 1,\\
                                 \displaystyle \frac{1}{\pi\sqrt{r^2 - x^2}}&\text{otherwise,}
                                \end{cases}
\]
and the $\delta$-Robin constant is given by
\[
V_\delta(E_\infty) = \begin{cases}
                      \displaystyle \log\frac{2}{r} + \frac{2}{\pi} \int_1^r \frac{\log{x}\,dx}{\sqrt{r^2-x^2}} + 2 \int_1^r \log{x}\,d\mu_{E_\infty}(x)&\text{if }r\geq 1,\\
                      r/2 &\text{otherwise.}
                     \end{cases}
\]
\end{thm}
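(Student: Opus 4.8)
The plan is to recognize $V_\delta(E_\infty)$ as the constant of a classical weighted logarithmic equilibrium problem on the real segment. Writing points of $\PP^1(\RR)$ in the affine coordinate $x\leftrightarrow(x:1)$, one has $-\log\delta(x,y)=\log\frac1{|x-y|}+\log^+|x|+\log^+|y|$, so that
\[
I_\delta(\nu)=\iint \log\frac1{|x-y|}\,d\nu(x)\,d\nu(y)+2\int\log^+|x|\,d\nu(x),
\]
which is the energy of the logarithmic kernel on $[-r,r]$ in the external field $Q(x)=\log^+|x|$. By uniqueness of the equilibrium measure together with the Frostman variational conditions (classical weighted potential theory, e.g.\ Saff--Totik), it suffices to exhibit a probability measure $\mu$ on $[-r,r]$ and a constant $F$ with $U^\mu+Q\equiv F$ on $\supp\mu$ and $U^\mu+Q\ge F$ throughout $[-r,r]$, where $U^\mu(x)=\int\log\frac1{|x-t|}\,d\mu(t)$; then $\mu=\mu_{E_\infty}$ and $V_\delta(E_\infty)=F+\int Q\,d\mu$.

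When $r\le 1$, $Q$ vanishes identically on $[-r,r]$, so the problem is the ordinary logarithmic equilibrium problem for the segment: the minimizer is the arcsine distribution $d\mu=\frac{dx}{\pi\sqrt{r^2-x^2}}$, and the second cases of both displayed formulas follow directly. From here on take $r\ge 1$, where $Q'(x)=\frac1x$ for $1<|x|<r$ and $Q'(x)=0$ for $|x|<1$.

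For $r\ge 1$ I would look for $\mu$ with $\supp\mu=[-r,r]$ and pin down its density by differentiating the equilibrium identity $U^\mu+Q\equiv F$ on $(-r,r)$, which produces the finite Hilbert transform (airfoil) equation
\[
\mathrm{p.v.}\!\int_{-r}^r\frac{\mu'(t)\,dt}{x-t}=Q'(x)=\frac1x\,\chi_{\{1<|x|<r\}}(x),\qquad x\in(-r,r).
\]
Inverting this by the Tricomi formula, the unit-mass solution with the usual endpoint behaviour is
\[
\mu'(x)=\frac1{\pi^2\sqrt{r^2-x^2}}\left(c+\mathrm{p.v.}\!\int_{-r}^r\frac{\sqrt{r^2-t^2}\,Q'(t)}{t-x}\,dt\right),
\]
with $c$ determined by $\int_{-r}^r\mu'=1$. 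The principal-value integral equals $\mathrm{p.v.}\int_{1<|t|<r}\frac{\sqrt{r^2-t^2}}{t(t-x)}\,dt$; splitting $\frac1{t(t-x)}=\frac1x\bigl(\frac1{t-x}-\frac1t\bigr)$ and using the elementary primitives of $\frac{\sqrt{r^2-t^2}}{t-x}$ and $\frac{\sqrt{r^2-t^2}}{t}$ evaluated at $t=1$ and $t=r$ (which is where the $\sqrt{r^2-1}$ and $\arcsin(1/r)$ contributions enter), and then solving for $c$, should yield exactly the stated density: $c$ together with the $\frac1t$-term combine into the $\frac{2\arcsin(1/r)}{\pi^2\sqrt{r^2-x^2}}$ piece, while the $\frac1{t-x}$-term produces the logarithm. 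One must then check that this $\mu'$ is nonnegative on $[-r,r]$ and that $U^\mu+Q\ge F$ on $[-r,r]$; these two verifications are precisely what certify that the support is the whole segment and that $\mu$ is the equilibrium measure, rather than one supported on a proper subinterval.

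Finally, identify $F$ by evaluating $U^\mu(x)+Q(x)$ at a convenient point — e.g.\ $x=1$, where $Q(1)=0$ — using the density just obtained and standard logarithmic-potential integrals over $[-r,r]$; this should give $F=\log\frac2r+\frac2\pi\int_1^r\frac{\log x\,dx}{\sqrt{r^2-x^2}}$. Since $\mu_{E_\infty}$ is even, $\int Q\,d\mu=2\int_1^r\log x\,d\mu_{E_\infty}(x)$, and hence $V_\delta(E_\infty)=F+\int Q\,d\mu$ is the asserted expression. I expect the main obstacle to be twofold: carrying out and simplifying the principal-value integrals in the inversion so as to reach the closed form with the $\sqrt{r^2-1}$ factors, and verifying nonnegativity of $\mu'$ together with the exterior inequality $U^\mu+Q\ge F$, which is what actually establishes $\supp\mu_{E_\infty}=[-r,r]$; the evaluation of $F$ is then routine.
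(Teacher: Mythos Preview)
Your approach is correct and computationally follows the same route as the paper: both reduce to the weighted logarithmic energy problem on $[-r,r]$ with external field $Q(x)=\log^+|x|$, and both compute the density by inverting the finite Hilbert transform (the paper quotes this as Theorem~3.1, p.~221 of Saff--Totik, which is precisely the Tricomi/airfoil inversion you describe, packaged for even fields on $[-1,1]$ after rescaling). The evaluation of $V_\delta=F+\int Q\,d\mu$ is also identical.

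The one substantive difference is in the justification that $\supp\mu_{E_\infty}=[-r,r]$ and that the Frostman equality holds on all of $[-r,r]$---exactly the step you flag as ``the main obstacle.'' The paper bypasses this verification entirely with a balayage argument: since $Q(x)=-U^\tau(x)$ for $\tau$ the Haar measure on the unit circle, the balayage $\hat\tau$ of $\tau$ from $\overline{\CC}\setminus[-r,r]$ onto $[-r,r]$ is automatically a positive unit measure whose potential satisfies $U^{\hat\tau}+Q=\int g_\Omega(t,\infty)\,d\tau(t)$ \emph{identically} on $[-r,r]$ (Saff--Totik, Theorem~II.4.4). This immediately gives $\mu_{E_\infty}=\hat\tau$ with full support and the exact equality, with no need to check nonnegativity of the density or the exterior inequality after the fact. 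Your plan would still work, but you would have to carry out those checks directly; the balayage observation is what makes them unnecessary.
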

The proof of Theorem \ref{segment} will be given in Section \ref{sec:real} below. 
\begin{thm}
 Let $E_p = p^n \bZ_p$ for $n\in\bZ$. If $n\geq 0$, then $\mu_p$ is the normalized Haar measure of $E_p$ as an additive group, while if $n<0$, then $\mu_p$ can be written as a linear combination of the normalized Haar measures of $\bZ_p$ and $p^k \bZ_p^\times$ for $-1\geq k \geq n$, and 
 \[
  V_\delta(E_p) = \begin{cases}
                     \displaystyle n \log p + \frac{p\log p}{p-1}&\text{if }n\geq 0,\\
                     \displaystyle \frac{p+p^{2n}}{p^2 - 1} \log p&\text{if }n<0.
                    \end{cases}
 \]
\end{thm}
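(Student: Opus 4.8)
The plan is to handle the cases $n\ge 0$ and $n<0$ separately, in each reducing the computation of $V_\delta(E_p)$ to an explicit minimization. When $n\ge 0$ we have $E_p=p^n\bZ_p\subseteq\bZ_p$, so $\max\{|x|_p,1\}=\max\{|y|_p,1\}=1$ for $x,y\in E_p$ and the spherical metric restricts to $\delta(x,y)=|x-y|_p$; thus $-\log\delta(x,y)=(\log p)\,v_p(x-y)=(\log p)\sum_{j\ge 1}\1\{x\equiv y\bmod p^j\}$, where $v_p$ is the $p$-adic valuation. For any Borel probability measure $\nu$ on $E_p$, integrating and using $x\equiv y\equiv 0\bmod p^j$ for $j\le n$ gives
\[
I_\delta(\nu)=n\log p+(\log p)\sum_{j>n}\ \sum_{r}\nu(r+p^j\bZ_p)^2,
\]
the inner sum running over the $p^{\,j-n}$ residues $r\bmod p^j$ divisible by $p^n$. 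Since these classes partition $E_p$, Cauchy--Schwarz bounds each inner sum below by $p^{-(j-n)}$, whence $I_\delta(\nu)\ge n\log p+(\log p)\sum_{i\ge 1}p^{-i}=n\log p+\tfrac{\log p}{p-1}$ (agreeing with the $n<0$ formula below as $n\to 0$), with equality exactly when $\nu$ gives equal mass to the residue classes modulo every $p^j$, i.e.\ when $\nu=\mu_p$ is normalized Haar measure.

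For $n<0$ set $m=-n\ge 1$, so $E_p=\{z:|z|_p\le p^m\}$, and decompose $E_p=A_0\sqcup A_1\sqcup\cdots\sqcup A_m$ with $A_0=\bZ_p$ and $A_j=\{|z|_p=p^j\}=p^{-j}\bZ_p^\times$. A short computation from the definition of $\delta$ shows that for $x\in A_i$, $y\in A_j$ with $i\ne j$ one has $-\log\delta(x,y)=\min\{i,j\}\log p$, depending only on $i,j$. Hence for a probability measure $\nu$ with masses $m_j:=\nu(A_j)$,
\[
I_\delta(\nu)=\log p\sum_{i\ne j}\min\{i,j\}\,m_im_j+\sum_{j=0}^m\iint_{A_j\times A_j}-\log\delta(x,y)\,d\nu\,d\nu,
\]
and for fixed masses each self-energy term is minimized by taking $\nu|_{A_j}$ to be $m_j$ times the $\delta$-equilibrium measure of $A_j$, contributing $m_j^2c_j$. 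Here $c_0=V_\delta(\bZ_p)=\tfrac{\log p}{p-1}$ by the case above, and the change of variables $z=p^{-j}u$ on $A_j=p^{-j}\bZ_p^\times$ gives $c_j=j\log p+V(\bZ_p^\times)$ for $j\ge 1$, where the energy $V(\bZ_p^\times)$ for the kernel $-\log|u-v|_p$ equals $\tfrac{p\log p}{(p-1)^2}$: the set $\bZ_p^\times$ is the disjoint union of its $p-1$ residue discs, across which that kernel vanishes, each isometric after translation to $p\bZ_p$ (whose energy is $\log p+V_\delta(\bZ_p)=\tfrac{p\log p}{p-1}$). The relevant equilibrium measures are the normalized Haar measures of $\bZ_p$ and of the groups $p^{-j}\bZ_p^\times$, which identifies $\mu_p$ as asserted.

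It then remains to minimize $Q(m_0,\dots,m_m)=\log p\sum_{i\ne j}\min\{i,j\}m_im_j+\sum_{j}c_jm_j^2$ over the simplex $\{m_j\ge 0,\ \sum_jm_j=1\}$. Setting $S_k=\sum_{j\ge k}m_j$ (with $S_{m+1}=0$) and using the identity $\sum_{i,j}\min\{i,j\}m_im_j=\sum_{k\ge 1}S_k^2$, one rewrites $Q=\log p\sum_{k=1}^mS_k^2+\tfrac{\log p}{p-1}(1-S_1)^2+\tfrac{p\log p}{(p-1)^2}\sum_{j=1}^m(S_j-S_{j+1})^2$, whose Hessian in $S$ is bounded below by $2\log p$ times the identity, so there is a unique minimizer. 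Its stationarity conditions $\partial Q/\partial S_k=0$ reduce, for $2\le k\le m$, to the three-term recurrence $S_{k+1}=(p+p^{-1})S_k-S_{k-1}$ with characteristic roots $p$ and $p^{-1}$; the $k=m$ equation is equivalent to $S_{m+1}=0$, and the $k=1$ equation is an inhomogeneous condition coming from the $(1-S_1)^2$ term. Solving yields $S_k=\tfrac{1}{p^m(p+1)}\bigl(p^{\,m+1-k}-p^{\,k-m-1}\bigr)$, and in particular every $m_j=S_j-S_{j+1}>0$, so the minimizer is interior. Since $Q$ is quadratic and homogeneous, at an interior constrained minimizer all partials $\partial Q/\partial m_i$ share a common value $2\Lambda$ and $Q=\Lambda$; computing $\partial Q/\partial m_0=2c_0m_0$ (the terms $\min\{0,j\}m_j$ vanish) gives $V_\delta(E_p)=\Lambda=c_0m_0=\tfrac{\log p}{p-1}(1-S_1)$. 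As $1-S_1=\tfrac{p+p^{-2m}}{p+1}=\tfrac{p+p^{2n}}{p+1}$, this equals $\tfrac{p+p^{2n}}{p^2-1}\log p$.

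The main obstacle is the $n<0$ case. The substantive points are (i) checking carefully that the inter-annulus spherical distances depend only on the annuli, so that the problem genuinely decouples into the finite quadratic program above---this is where the ultrametric structure is used essentially---and (ii) solving that program, which requires the auxiliary constants $V_\delta(\bZ_p)$ and $V(\bZ_p^\times)$ together with a careful treatment of the tridiagonal system and of the nonnegativity of the solution. No separate Frostman-type verification is needed: we have bounded $I_\delta(\nu)$ below by $\min Q$ for every $\nu$ and exhibited a measure attaining it, so that measure is the equilibrium measure $\mu_p$ and $\min Q=V_\delta(E_p)$.
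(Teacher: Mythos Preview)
Your argument is correct and takes a route genuinely different from the paper's. For $n<0$ the paper (in Theorem~\ref{thm:main-padic-full}) first uses the $\PGL_2(O_K)$-invariance of $\delta$ together with uniqueness of the equilibrium measure to force the shape $c_0\lambda_0+\sum_k c_k\nu_k$, then invokes Berkovich potential theory (Baker--Rumely) to characterize $\mu_p$ by constancy of its $\delta$-potential; equating $U^{\mu_n}_\delta(0)=U^{\mu_n}_\delta(\pi^k)$ for $k=-1,\dots,n$ yields a linear system in the $c_k$, solved by an ad hoc elimination to obtain $c_0=(q+q^{2n})/(q+1)$ and hence $V_\delta=U^{\mu_n}_\delta(0)$. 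You instead bypass the Frostman characterization entirely: after checking that the cross-annulus contribution to $I_\delta(\nu)$ depends only on the masses $m_j$, you reduce to a strictly convex finite quadratic program, pass to the tail sums $S_k$ to obtain a clean tridiagonal recurrence with roots $p^{\pm1}$, verify the minimizer is interior, and extract $V_\delta=\Lambda=c_0m_0$ via Euler's identity for homogeneous quadratics. The paper's approach plugs directly into the standard potential-theoretic framework and generalizes verbatim to arbitrary local fields; yours is more self-contained (no Berkovich input needed), gives uniqueness of $\mu_p$ for free from strict convexity, and the change of variables to $S_k$ makes the linear algebra considerably more transparent than the paper's elimination. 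For $n\ge 0$ the paper simply evaluates the logarithmic potential of Haar measure at $0$, while you use the layer-cake identity $v_p(x-y)=\sum_{j\ge 1}\1\{x\equiv y\bmod p^j\}$ and Cauchy--Schwarz; both are elementary. (Incidentally, your $n\ge 0$ value $n\log p+\tfrac{\log p}{p-1}$ agrees with Theorem~\ref{thm:main-padic-full} and with the $n<0$ formula at $n=0$; the $\tfrac{p\log p}{p-1}$ in the displayed statement is a misprint.)
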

For $n<0$ the expression of the $\delta$-equilibrium measure $\mu_p$ as a combination of the Haar measure of $\bZ_p$ and the Haar measures of the shells $p^k\bZ_p^\times$ for $k<0$ can be computed explicitly. Those details, as well as a more general statement of the above theorem for arbitrary finite extensions of $\bQ_p$, can be found in Theorem \ref{thm:main-padic-full} of Section \ref{sec:p-adic} below.

\subsection{Example applications}
In order to give an indication of the strength of these results, we give here some example applications. We begin with an example inspired by that from \cite{F-P-EIOLF}.
\begin{example}
Let $S=\{2,\infty\}$, $E_2 = 2^{-1}\bZ_2$ and $E_\infty = [-2,2]$, so that $A_S$ is the set of all numbers which are algebraic numbers that satisfy:
\begin{itemize}
 \item All conjugates of $\alpha$ are real and lie in the interval $[-2,2]$.
 \item All conjugates of $\alpha$ have $2$-adic absolute value at most $2$.
\end{itemize}
It then follows from Theorem \ref{thm:global} that
\[
 \liminf_{\alpha\in A_S} h(\alpha) \geq \frac12 V_\delta(E_\infty) + \frac12 V_\delta(2^{-1}\bZ_2) = 0.239632\ldots + 0.25993\ldots = 0.499562\ldots 
\]
This improves on the values one could obtain from \cite[Theorem 3]{F-P-EIOLF}, which only uses the fact that the elements of $L_S$ are totally real and totally $2$-adic, and would have allowed us to conclude that
\[
 \liminf_{\al\in A_S} h(\al) \geq \frac{7\zeta(3)}{4\pi^2} + \frac{1}{2}\cdot\frac{2\log 2}{2^2-1} = 0.231049\ldots + 0.213139\ldots = 0.444188\ldots 
\]
Both results are better than the $p$-adic bound of the Bombieri-Zannier at $p=2$, which would yield
\[
 \liminf_{\al\in A_S} h(\al)\geq \frac{1}{2} \cdot \frac{\log 2}{2+1} = 0.115525\ldots
\]
and separate the bound of Schinzel for totally real numbers, which implies that
\[
  \liminf_{\al\in A_S} h(\al)\geq \frac{1}{2}\log\bigg(\frac{1+\sqrt{5}}{2}\bigg)=0.24061...
\]
\end{example}
\begin{example}
 Suppose $\al_n$ is a sequence of distinct algebraic numbers for which the conjugates of $\al_n$ all lie in $[-2,2]$ for any archimedean place. If the $\al_n$ are assumed to be algebraic integers, then as is known that the $\al_n$ must distribute at the real place according to the logarithmic equilibrium distribution of $[-2,2]$, which is given by 
 \[
  d\mu(x) = \frac{dx|_{[-2,2]}}{\pi\sqrt{4-x^2}}.
 \]
 It follows that
 \[
  \lim_{n\ra\infty} h(\al_n) = \int_{-2}^2 \frac{\log^+\abs{x}}{\pi\sqrt{4-x^2}}dx = 0.323066\ldots 
 \]
 On the other hand, if the assumption that the $\al_n$ are algebraic integers is dropped, then previously, the best result that could be applied was Schinzel's theorem for totally real algebraic numbers \cite{SchinzelTotReal}:
 \[
  \liminf_{n\ra\infty} h(\al_n) \geq \frac{1}{2}\log \frac{1+\sqrt{5}}{2} = 0.240606\ldots 
 \]
 It is worth noting that the archimedean contribution of our height bound for $E_\infty=[-2,2]$, $\frac12 V_\delta([-2,2]) = 0.239632\ldots$, is smaller than Schinzel's bound, however, as the previous example illustrates, it possesses the advantage that it can be appplied to non-integers and combined with $p$-adic splitting conditions to obtain stronger bounds.
\end{example}
On the other hand, for non-integers, our result even at only the archimedean place is highly non-trivial:
\begin{example}
 Let $S=\{\infty\}$ and $E_\infty = [-1,1]$, so that $A_S$ contains algebraic numbers all of whose conjugates lie in $[-1,1]$. Notice that $A_S$ cannot contain more than finitely many algebraic integers, as the logarithmic capacity of $[-1,1]$ is strictly less than $1$ by the classical Fekete-Szeg\H{o} theorem. It then follows from our theorem that
 \[
  \liminf_{\alpha \in A_S} h(\alpha) \geq \frac12 V_\delta([-1,1]) = 0.346574\ldots 
 \]
 which substantially exceeds both bounds in the previous example. 
\end{example}

Lastly, we note that we can generalize Theorem \ref{thm:global} to the case where we consider conjguates of $\alpha$ to over a base number field $K$, and for a set $S$ of places of $K$, we choose for each $v\in S$ a closed subset of the projective line $\bP^1(L_v)$ for a finite normal extension $L_v/K_v$. This changes the statement of Theorem \ref{thm:global} trivially in that the energies $V_\delta(E_v)$ are calculated as before, but each factor of $V_\delta(E_v)$ is now weighted by $N_v = [K_v:\bQ_v]/[K:\bQ]$ as in the proof of \cite[Theorem 9]{F-P-EIOLF}. 

\section{Archimedean results}\label{sec:real}
We will being by proving the results for the archimedean setting.
\begin{proof}[Proof of Theorem \ref{segment}]
We use the notation and terminology of Saff and Totik \cite{ST} in this proof. Thus we deal with a logarithmic energy problem with the external field $Q(x)=\log^+|x|$ on $[-r,r],\ r\ge 1.$. Note that this external field can be written as the negative of the logarithmic potential:
\[
Q(x) = \int \log|x-t|\,d\tau(t) = - U^\tau(x),
\]
where $d\tau(e^{i\theta})=d\theta/(2\pi)$ is the Haar (equilibrium) measure on $\TT.$ Consider the balayage $\hat{\tau}$ of the measure $\tau$ from the domain $\Omega=\overline{\CC}\setminus[-r,r]$ onto $[-r,r]$, see Section II.4 of \cite{ST}. It follows from Theorem 4.4 of \cite[p. 115]{ST} that $\hat{\tau}$ is a unit measure  supported on $[-r,r]$, whose potential satisfies
\[
U^{\hat{\tau}}(x) + Q(x) = U^{\hat{\tau}}(x) - U^\tau(x) = \int g_\Omega(t,\infty)\,d\tau(t), \quad x\in[-r,r],
\]
where $g_\Omega(t,\infty)$ is the Green function of $\Omega$ with pole at $\infty.$ Hence $\hat{\tau}$ is the equilibrium measure of $[-r,r]$ in the external field $Q$ by Theorem 3.3 of \cite[p. 44]{ST}. Thus the support of $\mu_{E_\infty}=\hat{\tau}$ is $[-r,r]$, and the above equation allows to find the measure explicitly by using well known integral equation methods. In particular, Theorem 3.1 of \cite[p. 221]{ST} states that if $f\in C[-1,1]$ is even and $f'(x)/\sqrt{1-x^2}\in L^p[-1,1]$ for some $p\in(1,2),$ then the integral equation
\[
- \int_{-1}^1 g(t) \log|x-t|\,dt = -f(x) + C_f,\quad x\in [-1,1],
\]
has a solution of the form
\[
g(t) = L[f'](t) + \frac{B_f}{\pi\sqrt{1-t^2}}, \quad\mbox{a.e. }t\in(-1,1), 
\]
where
\[
L[f'](t) = \frac{2}{\pi^2} \text{ PV} \int_0^1 \frac{\sqrt{1-t^2}\,s f'(s)}{\sqrt{1-s^2} (s^2-t^2)}\,ds, \quad\mbox{a.e. }t\in(-1,1), 
\]
and
\[
B_f = 1 - \frac{1}{\pi} \int_{-1}^1 \frac{s f'(s)}{\sqrt{1-s^2}}\,ds.
\]
Moreover, the constant $C_f$ is uniquely determined by 
\[
C_f = \frac{2}{\pi} \int_0^1 \frac{f(s)}{\sqrt{1-s^2}}\,ds + \log{2}.
\]
Scaling the problem from $[-r,r]$ to $[-1,1]$ by the linear change of variable $x=rt$, we apply the above stated result with $f(t):=\log^+|rt|,\ t\in[-1,1].$ It is immediate to see that 
\[
B_f = 1 - \frac{2}{\pi} \int_{1/r}^1 \frac{ds}{\sqrt{1-s^2}} = \frac{2}{\pi}\,\arcsin\frac{1}{r}
\]
and
\[
C_f = \frac{2}{\pi} \int_{1/r}^1 \frac{\log(rs)}{\sqrt{1-s^2}}\,ds + \log{2} = \frac{2}{\pi} \int_1^r \frac{\log{x}\,dx}{\sqrt{r^2-x^2}} + \log{2}.
\]
Thus it remains to evaluate $L[f'](t)$ explicitly, with
\[
L[f'](t) = \frac{2}{\pi^2} \text{ PV} \int_{1/r}^1 \frac{\sqrt{1-t^2} \,ds}{\sqrt{1-s^2} (s^2-t^2)}.
\]
One can verify by direct differentiation that for any fixed $t\in[-1,1]\setminus\{0\}$, the function
\[
F_t(s) = \frac{1}{\pi^2 t} \log\left|\frac{(s-t)(1+st+\sqrt{1-t^2}\sqrt{1-s^2})}{(s+t)(1-st+\sqrt{1-t^2}\sqrt{1-s^2})}\right|
\]
satisfies
\[
\frac{dF_t}{ds}(s) = \frac{2}{\pi^2} \frac{\sqrt{1-t^2}}{\sqrt{1-s^2} (s^2-t^2)},\quad s\in(-1,1),\ s\neq t.
\]
Since for $t\in(-1/r.1/r)$ the integral defining $L[f'](t)$ becomes a regular integral instead of principal value, we can evaluate it directly by using the antiderivative $F_t$:
\[
L[f'](t) = F_t(1) - F_t(1/r) = - F_t(1/r),\quad t\in(-1/r.1/r).
\]
The values of $L[f'](t)$ for $1/r\le|t|<1$ are found by using the identity
\[
\text{ PV} \int_0^1 \frac{ds}{\sqrt{1-s^2} (s^2-t^2)} = 0,
\]
see the last equation on page 225 of \cite{ST}. Indeed, it gives that 
\begin{align*}
L[f'](t) &= \frac{2}{\pi^2} \text{ PV} \int_{1/r}^1 \frac{\sqrt{1-t^2} \,ds}{\sqrt{1-s^2} (s^2-t^2)} = - \frac{2}{\pi^2} \text{ PV} \int_0^{1/r} \frac{\sqrt{1-t^2} \,ds}{\sqrt{1-s^2} (s^2-t^2)} \\ &= F_t(0) - F_t(1/r) = - F_t(1/r),\quad t\in(-1,-1/r]\cup[1/r,1).
\end{align*}
Hence the solution of the equilibrium integral equation on $[-1,1]$ is given by
\begin{align*}
g(t) &= \frac{2\arcsin(1/r)}{\pi^2 \sqrt{1-t^2}} - \frac{1}{\pi^2 t} \log\left|\frac{(1/r-t)(1+t/r+\sqrt{1-t^2}\sqrt{1-r^{-2}})}{(1/r+t)(1-t/r+\sqrt{1-t^2}\sqrt{1-r^{-2}})}\right|.
\end{align*}
Returning to the interval $[-r,r]$ by letting $x=rt$, we obtain the equation 
\begin{align*}
- \int_{-r}^r G(x) \log|x-y|\,dx = - \log^+|y| + \frac{2}{\pi} \int_1^r \frac{\log{x}\,dx}{\sqrt{r^2-x^2}} + \log\frac{2}{r},\quad y\in [-r,r],
\end{align*}
with
\begin{align*}
\frac{d\mu_{E_\infty}}{dx}(x) = G(x) = \frac{2\arcsin(1/r)}{\pi^2 \sqrt{r^2-x^2}} + \frac{1}{\pi^2 x} \log\left|\frac{(x+1)(r^2-x+\sqrt{r^2-x^2}\sqrt{r^2-1})}{(x-1)(r^2+x+\sqrt{r^2-x^2} \sqrt{r^2-1})}\right|.
\end{align*}
The $\delta$-Robin constant is found from (1.14) of \cite[p. 27]{ST} as
\begin{align*}
V_\delta(E_\infty) &= \log\frac{2}{r} + \frac{2}{\pi} \int_1^r \frac{\log{x}\,dx}{\sqrt{r^2-x^2}} + \int \log^+|x|\,d\mu_{E_\infty}(x) \\ &= \log\frac{2}{r} + \frac{2}{\pi} \int_1^r \frac{\log{x}\,dx}{\sqrt{r^2-x^2}} + 2 \int_1^r G(x) \log{x}\,dx.\qedhere
\end{align*}
\end{proof}

\section{$p$-adic results}\label{sec:p-adic}
Let $K$ be a non-archimedean local field with absolute value $\abs{\cdot}$. Let $O_K$ denote the ring of integers of $K$. In this section we compute the $\delta$-equilibrium measure $\mu_n$ of $\pi^n O_K$ and its associated $\delta$-Robin constant $V_\delta(\pi^n O_K)$. For the basic results of non-archimedean potential theory we refer the reader to \cite{RumelyBook,BakerRumelyBook}. We begin by setting our notation:
\begin{center}
\begin{tabular}{cl}
 $K$ & our non-archimedean local field\\
 $O_K$ & the ring of integers of $K$, $O_K=\{x \in K : \abs{x}\leq 1\}$\\
 $\pi$ & a uniformizing parameter of $K$\\
 $q$ & the order of the residue field $O_K/\pi O_K$\\
 $\lambda_k$ & the unit Haar measure of $\pi^n O_K$ as an additive group\\
 $\nu_k$ & the unit Haar measure of $\pi^n O_K^\times$ as a multiplicative group\\
 $\gamma_\infty(E)$ & the logarithmic capacity of a compact set $E\subset K$
\end{tabular}
\end{center}
We now state a few lemmas which we will need.
\begin{lemma}\label{lemma:capcity-of-pin-OK}
 The logarithmic equilibrium measure of $\pi^n O_K$ is its unit Haar measure $\mu_n$ and it has logarithmic capacity
 \begin{equation}\label{eqn:capcity-of-pin-OK}
  \log \gamma_\infty(\pi^n O_K) = n \logabs{\pi} + \frac{\logabs{\pi}}{q-1}
 \end{equation}
\end{lemma}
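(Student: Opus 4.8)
The plan is to reduce everything to the case $n=0$, i.e. to $O_K$ itself, and then compute the logarithmic energy of the Haar measure directly using the ultrametric structure. First I would observe that scaling by $\pi^n$ translates the problem: if $\mu_0$ is the equilibrium measure of $O_K$, then the pushforward of $\mu_0$ under $x \mapsto \pi^n x$ is the unit Haar measure $\mu_n$ of $\pi^n O_K$, and since $\abs{\pi^n x - \pi^n y} = \abs{\pi}^n \abs{x-y}$, the logarithmic energy transforms as $I_\infty(\mu_n) = I_\infty(\mu_0) + 2n\logabs{\pi}\cdot\mu_0(O_K)^2\cdot(-1)$ — more precisely $-\log\gamma_\infty(\pi^n O_K) = -\log\gamma_\infty(O_K) - n\logabs{\pi}$ after tracking the sign conventions, which yields the claimed additive term $n\logabs{\pi}$. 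Hence it suffices to prove $\log\gamma_\infty(O_K) = \logabs{\pi}/(q-1)$ and that $\lambda_0$ is the equilibrium measure.

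For the core computation I would compute $I_\infty(\lambda_0) = \iint -\logabs{x-y}\,d\lambda_0(x)\,d\lambda_0(y)$. Fixing $x \in O_K$, by translation-invariance of Haar measure the inner integral $\int -\logabs{x-y}\,d\lambda_0(y)$ is independent of $x$, so it equals $\int_{O_K} -\logabs{y}\,d\lambda_0(y)$. This last integral is a standard ultrametric computation: decompose $O_K = \bigsqcup_{k\geq 0}(\pi^k O_K \setminus \pi^{k+1}O_K)$, where the $k$-th shell has Haar measure $q^{-k} - q^{-k-1} = q^{-k}(1 - q^{-1})$ and $\abs{y} = \abs{\pi}^k$ there (noting $\abs{\pi} = q^{-1}$), giving
\[
\int_{O_K} -\logabs{y}\,d\lambda_0(y) = \sum_{k\geq 0} k\,(-\logabs{\pi})\,q^{-k}(1-q^{-1}) = -\logabs{\pi}\cdot\frac{q^{-1}}{1-q^{-1}} = \frac{-\logabs{\pi}}{q-1}.
\]
Therefore $I_\infty(\lambda_0) = -\logabs{\pi}/(q-1)$, which matches $-\log\gamma_\infty(O_K)$ provided $\lambda_0$ is indeed the minimizer.

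To see that $\lambda_0$ is the equilibrium measure, I would invoke the uniqueness/characterization of the logarithmic equilibrium measure via its potential: it suffices to check that the potential $U^{\lambda_0}(x) = \int -\logabs{x-y}\,d\lambda_0(y)$ is constant on $O_K$, which is exactly what the translation-invariance argument above established (it equals $-\logabs{\pi}/(q-1)$ for every $x \in O_K$). A measure on a compact set whose potential is constant and equal to the Robin constant on the whole set is the equilibrium measure, so $\mu_n = \lambda_n$ (the notation $\mu_n$ in the statement should read $\lambda_n$, or they are being identified) and the capacity formula follows. The main obstacle — really the only place care is needed — is bookkeeping the sign and normalization conventions relating $\log\gamma_\infty$, the energy $I_\infty$, and the normalization $\abs{\pi} = q^{-1}$, together with citing the correct form of the characterization of the equilibrium measure from \cite{RumelyBook} or \cite{BakerRumelyBook} in the non-archimedean setting; the analysis itself is the elementary geometric-series computation above.
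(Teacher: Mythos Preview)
Your proposal is correct and follows essentially the same line as the paper: both arguments use translation invariance of the Haar measure to show the potential is constant on the disc (hence the Haar measure is the equilibrium measure), then evaluate the potential at a convenient point via the shell decomposition $\bigsqcup_{k\ge n}\pi^k O_K^\times$ and the resulting geometric series. The only cosmetic difference is that the paper carries out the shell sum directly on $\pi^n O_K$, whereas you reduce to $n=0$ and recover the general case by scaling; the content is identical.
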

\begin{proof}
 The proof for $n=0$ can be found in Rumely \cite[Example 4.1.24]{RumelyBook}, and the general result follows by the scaling property for capacity. For the convenience of the reader, we will reproduce it here. Let $p_{\mu_n}(x)$ be the associated potential function:
 \[
  p_{\mu_n}(x) = \int_{\pi^n O_K} \logabs{x-y}\,d\mu_n(y).
 \]
 By translation invariance of $\mu_n$, we see that $p_{\mu_n}(x)=p_{\mu_n}(0)$ for every $x\in\pi^n O_K$. It follows that $\mu_n$ is the equilibrium measure. Since the value of $p_{\mu_n}(x)$ must agree with $\log \gamma_\infty(\pi^n O_K)$ quasi-everywhere on $\pi^n O_K$, and $U$ is constant on the set, we can evaluate it at any convenient point to determine the capacity. We compute:
 \begin{align*}
  \log \gamma_\infty(\pi^n O_K) &= p_{\mu_n}(0) = \int_{\pi^n O_K} \logabs{y}\,d\mu_n(y)\\
  &= \sum_{k=n}^\infty \int_{\pi^k O_K^\times} \logabs{y}\,d\mu_n(y)\\
  &= \sum_{k=n}^\infty \logabs{\pi^k}\,\cdot\, \mu_n(\pi^k O_K^\times)\\
  &= \sum_{k=n}^\infty k \logabs{\pi}\,\cdot\, \frac{1}{q^{k-n}} \frac{q-1}{q}\\
  &= n \logabs{\pi} + \frac{\logabs{\pi}}{q-1}.\qedhere
 \end{align*}
\end{proof}
\begin{lemma}\label{lemma:capcity-of-pin-OK-cross}
 The logarithmic equilibrium measure of $\pi^n O_K^\times$ is $\nu_n$ and it has capacity
 \begin{equation}\label{eqn:capcity-of-pin-OK-cross}
  \log \gamma_\infty(\pi^n O_K^\times) = n \logabs{\pi} + \frac{q \logabs{\pi}}{(q-1)^2}.
 \end{equation}
\end{lemma}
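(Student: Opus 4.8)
The plan is to mimic the proof of Lemma \ref{lemma:capcity-of-pin-OK}, with invariance under multiplication by units playing the role of translation invariance. First I would note that for every $u\in O_K^\times$ the map $y\mapsto uy$ is an isometry of $K$ which carries $\pi^n O_K^\times$ onto itself and preserves the measure $\nu_n$, and that $O_K^\times$ acts transitively on $\pi^n O_K^\times$ because any two elements of $\pi^n O_K^\times$ differ by a unit factor. Writing $p_{\nu_n}(x)=\int_{\pi^n O_K^\times}\logabs{x-y}\,d\nu_n(y)$ for the associated potential, these two facts give $p_{\nu_n}(ux)=p_{\nu_n}(x)$ for all $u\in O_K^\times$, so $p_{\nu_n}$ is constant on $\pi^n O_K^\times$. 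By the same characterization invoked in Lemma \ref{lemma:capcity-of-pin-OK} --- a probability measure on a compact set whose potential is constant on that set is its equilibrium measure --- this shows $\nu_n$ is the logarithmic equilibrium measure of $\pi^n O_K^\times$.

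It then remains to identify the capacity with the value of this constant potential at a convenient point, for which I would take $\pi^n\in\pi^n O_K^\times$. Substituting $y=\pi^n u$ with $u\in O_K^\times$, under which $\nu_n$ is the pushforward of $\nu_0$, yields
\[
\log\gamma_\infty(\pi^n O_K^\times)=p_{\nu_n}(\pi^n)=\int_{O_K^\times}\logabs{\pi^n-\pi^n u}\,d\nu_0(u)=n\logabs{\pi}+\int_{O_K^\times}\logabs{1-u}\,d\nu_0(u),
\]
so the whole computation reduces to evaluating $\int_{O_K^\times}\logabs{1-u}\,d\nu_0(u)$.

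To finish, I would split $O_K^\times$ into its $q-1$ residue classes modulo $\pi$; on each class other than the class of $1$ one has $\logabs{1-u}=0$, so only $1+\pi O_K$ contributes. Parametrizing $u=1+\pi z$ with $z\in O_K$ gives $\logabs{1-u}=\logabs{\pi}+\logabs{z}$, and the restriction of $\nu_0$ to $1+\pi O_K$ pushes forward under $u\mapsto z$ to $\tfrac{1}{q-1}\lambda_0$ on $O_K$, since that class has $\nu_0$-measure $\tfrac{1}{q-1}$ and $\nu_0$ agrees there with a constant multiple of additive Haar measure. Therefore
\[
\int_{O_K^\times}\logabs{1-u}\,d\nu_0(u)=\frac{1}{q-1}\left(\logabs{\pi}+\int_{O_K}\logabs{z}\,d\lambda_0(z)\right)=\frac{1}{q-1}\left(\logabs{\pi}+\frac{\logabs{\pi}}{q-1}\right)=\frac{q\logabs{\pi}}{(q-1)^2},
\]
the middle integral being $\log\gamma_\infty(O_K)=\logabs{\pi}/(q-1)$ by Lemma \ref{lemma:capcity-of-pin-OK} with $n=0$. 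Combined with the previous display this gives \eqref{eqn:capcity-of-pin-OK-cross}. I expect the only genuine obstacle to be keeping the normalizations straight among $\nu_0$, $\lambda_0$, and the Haar measure on the coset $1+\pi O_K$ in this last step; the structural input --- invariance under $O_K^\times$ together with transitivity --- is essentially immediate.
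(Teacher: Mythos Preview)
Your argument is correct. The identification of $\nu_n$ as the equilibrium measure via $O_K^\times$-invariance and transitivity is exactly what the paper does. Where you diverge is in the evaluation of $p_{\nu_n}(\pi^n)$: the paper proceeds indirectly, writing the additive Haar measure of $\pi^n O_K$ as $\tfrac{1}{q}\lambda_{n+1}+\tfrac{q-1}{q}\nu_n$, evaluating the potential of $\lambda_n$ at $\pi^n$ (which equals $\log\gamma_\infty(\pi^n O_K)$ by Lemma~\ref{lemma:capcity-of-pin-OK}), noting that the $\lambda_{n+1}$-piece contributes simply $\tfrac{1}{q}\logabs{\pi^n}$ because $\abs{\pi^n-y}=\abs{\pi^n}$ on $\pi^{n+1}O_K$, and then solving the resulting linear relation for $p_{\nu_n}(\pi^n)$. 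Your route is a direct evaluation: scale down to $O_K^\times$, split by residue class modulo $\pi$, and reduce the single nontrivial class $1+\pi O_K$ to the integral computed in Lemma~\ref{lemma:capcity-of-pin-OK}. Both methods ultimately rest on Lemma~\ref{lemma:capcity-of-pin-OK}; the paper's decomposition trick avoids tracking the pushforward normalizations you flagged as the main bookkeeping hazard, while your approach is more self-contained and makes the source of the $q/(q-1)^2$ factor (one contributing coset out of $q-1$, carrying an extra $\logabs{\pi}$ from the parameter $\pi$) transparently visible.
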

\begin{proof}
 Let $p_{\nu_n}(x)$ be the associated potential function:
 \[
  p_{\nu_n}(x) = \int_{\pi^n O_K^\times } \logabs{x-y}\,d\nu_n(y).
 \]
 By invariance of $\nu_n$ under multiplication by elements of absolute value $1$, we see that $p_{\nu_n}(x)=p_{\nu_n}(\pi^n)$ for every $x\in\pi^n O_K^\times$. It follows that $\nu_n$ is the equilibrium measure. Since the value of $p_{\nu_n}(x)$ must agree with $\log \gamma_\infty(\pi^n O_K^\times)$ quasi-everywhere on $\pi^n O_K^\times$, and $p_{\nu_n}$ is constant on the set, we can evaluate it at any convenient point to determine the capacity. We will compute the potential $p_{\nu_n}(\pi^n)$. We note that the Haar measure $\mu_n$ of the additive group $\pi^n O_K$ is given by
 \[
  \mu_n = \frac{1}{q} \mu_{n+1} + \frac{q-1}{q} \nu_n
 \]
 since $\mu_{n+1}$ and $\nu_n$ have disjoint supports $\pi^{n+1} O_K$ and $\pi^n O_K^\times$, respectively, whose union is $\pi^n O_K$, and for each $n$, the Haar measure $\mu_n$ is characterized by the fact that $\mu_n(\alpha+ \pi^{n+k}O_K) = 1/q^k$ for each $k\geq 1$ and $\alpha\in \pi^n O_K$, and $\nu_n$ is similarly characterized by the property that $\nu_n(\alpha+ \pi^{n+k}O_K)=\frac{1}{q^{k-1}(q-1)}$ for each $k\geq 1$ and $\alpha\in \pi^n O_K^\times$. We therefore write:
 \begin{align*}
  \log\gamma_\infty(\pi^n O_K) &= p_{\mu_n}(\pi^n)\\
  &= \frac{1}{q} \int_{\pi^{n+1}O_K} \logabs{\pi^n - y}\, d\mu_{n+1}(y)
  + \frac{q-1}{q} \int_{\pi^{n}O_K^\times} \logabs{\pi^n - y}\, d\nu_n(y)\\
  &= \frac{1}{q} \logabs{\pi^n} + \frac{q-1}{q} p_{\nu_n}(\pi^n).
 \end{align*}
Applying \eqref{eqn:capcity-of-pin-OK} from the previous lemma and solving for $p_{\nu_n}(\pi^n)$ gives the desired result.
\end{proof}

\begin{lemma}\label{lemma:delta-x-neq-y}
 Let $x,y\in K$, $\abs{x}\neq \abs{y}$. Then
 \[
  -\log \delta(x,y) = \begin{cases}
                       \log^+\min\{\abs{x},\abs{y}\} &\text{if }\max\{\abs{x},\abs{y}\}\geq 1\\
                       -\logabs{x-y} &\text{if }\max\{\abs{x},\abs{y}\}<1.\\
                      \end{cases}
 \]
\end{lemma}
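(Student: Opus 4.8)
The plan is to reduce the statement to the affine description of the spherical metric together with the ultrametric (isosceles triangle) inequality; there is no substantial obstacle, the lemma being a direct unwinding of definitions, and the only care needed is the bookkeeping in the two cases and the degenerate possibility that $x$ or $y$ is $0$.

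First I would write $x=(x:1)$ and $y=(y:1)$ in homogeneous coordinates, so that the defining formula for $\delta$ specializes to
\[
\delta(x,y)=\frac{\abs{x-y}}{\max\{\abs{x},1\}\max\{\abs{y},1\}},
\qquad\text{whence}\qquad
-\log\delta(x,y)=-\logabs{x-y}+\log^+\abs{x}+\log^+\abs{y}.
\]
This identity holds for all $x,y\in K$, with no hypothesis on the absolute values. Then I would invoke the hypothesis $\abs{x}\neq\abs{y}$: in a non-archimedean field two elements of distinct absolute value have difference of absolute value equal to the larger, i.e.\ $\abs{x-y}=\max\{\abs{x},\abs{y}\}$ (this remains valid when one of $x,y$ vanishes, which is in any case forced to be the smaller one). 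Substituting gives
\[
-\log\delta(x,y)=-\log\max\{\abs{x},\abs{y}\}+\log^+\abs{x}+\log^+\abs{y}.
\]

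Finally I would split into the two cases of the statement. Writing $M=\max\{\abs{x},\abs{y}\}$ and $m=\min\{\abs{x},\abs{y}\}$: if $M\geq 1$ then $\log^+ M=\log M$, the $\log M$ terms cancel, and what remains is $\log^+ m=\log^+\min\{\abs{x},\abs{y}\}$; if instead $M<1$ then both $\log^+\abs{x}$ and $\log^+\abs{y}$ vanish and $-\log\delta(x,y)=-\logabs{x-y}$. In the degenerate case $y=0$ (so $x\neq 0$) one has $m=0$ and the convention $\log^+0=0$ makes both formulas consistent. This completes the argument; the hardest part is merely keeping track of which of $\abs{x},\abs{y}$ is the maximum, which is exactly what the hypothesis $\abs{x}\neq\abs{y}$ is there to pin down.
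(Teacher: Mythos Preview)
Your proof is correct and follows essentially the same approach as the paper: both use the ultrametric identity $\abs{x-y}=\max\{\abs{x},\abs{y}\}$ when $\abs{x}\neq\abs{y}$ and substitute into the affine formula for $\delta(x,y)$, then read off the two cases. You have simply written out the case analysis in more detail than the paper, which dispatches the lemma in one sentence.
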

\begin{proof}
 Recall from the ultrametric property that $\abs{x}\neq \abs{y}$ implies that $\abs{x-y}=\max\{\abs{x},\abs{y}\}$. Substituting this into the definition of $\delta(x,y)$ gives the desired result.
\end{proof}

We are now ready to state and prove the main $p$-adic result.
\begin{thm}\label{thm:main-padic-full}
 Let $K$ be a non-archimedean field with absolute value $\abs{\cdot}$, residue field of order $q$ and uniformizing parameter $\pi$, as above, and let $n\in \bZ$. If $n\geq 0$, then the $\delta$-equilibrium of $\pi^n O_K$ is the additive Haar measure $\lambda_n$ and it has energy
 \[
  V_\delta(\pi^n O_K) = -n \logabs{\pi} - \frac{\logabs{\pi}}{q-1}.
 \]
 If $n<0$, then the $\delta$-equilibrium measure of $\pi^n O_K$ is given by
 \[
  \mu_n = c_0 \lambda_0 + c_{-1} \nu_{-1} + \cdots + c_{n} \nu_{n}
 \]
 where the constants $c_0,c_{-1},\ldots,c_{n}\geq 0$ sum to $1$ and are explicitly given by a linear system, and the $\delta$-Robin constant is
 \[
  V_\delta(\pi^n O_k) = -\frac{q + q^{2n}}{q^2 - 1}\logabs{\pi}.
 \]
\end{thm}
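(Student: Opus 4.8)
The plan is to handle the two ranges of $n$ separately; the case $n\ge 0$ is immediate, and the case $n<0$ requires a symmetry reduction followed by an explicit finite-dimensional minimization. When $n\ge 0$ everything collapses to classical logarithmic potential theory: $\pi^nO_K\subseteq O_K$, and for $x,y\in O_K$ the two maxima in the denominator of $\delta(x,y)$ equal $1$, so $\delta(x,y)=\abs{x-y}$. Hence on probability measures supported on $\pi^nO_K$ the energy $I_\delta$ is literally the logarithmic energy $\iint-\logabs{x-y}$, the $\delta$-equilibrium measure is the logarithmic equilibrium measure, and $V_\delta(\pi^nO_K)=-\log\gamma_\infty(\pi^nO_K)$; Lemma~\ref{lemma:capcity-of-pin-OK} then gives both claims.

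Assume $n<0$. The first step is a symmetry reduction. Let $\Gamma$ be the group of affine maps $x\mapsto ax+b$ with $a\in O_K^\times$, $b\in O_K$; as matrices $\left(\begin{smallmatrix}a&b\\0&1\end{smallmatrix}\right)$ these lie in $\GL_2(O_K)$ with unit determinant, so $\Gamma$ acts by isometries of $\delta$ (the bilinear numerator scales by $\abs{\det}=1$, while the two denominator maxima are unchanged because $\Gamma$ and $\Gamma^{-1}$ have entries in $O_K$). Since $n\le 0$ we have $O_K\subseteq\pi^nO_K$, and $\pi^nO_K$ is an additive group stable under multiplication by $O_K^\times$, hence $\Gamma$-invariant; by uniqueness of the $\delta$-equilibrium measure (\cite{F-P-EIOLF}, in the extension to $\PP^1(K)$ indicated after Theorem~\ref{thm:global}), $\mu_n$ is $\Gamma$-invariant. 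The $\Gamma$-orbits in $\pi^nO_K$ are exactly $O_K$ (already a single translation orbit) and the shells $\pi^kO_K^\times$ for $n\le k\le-1$ (single orbits under multiplication by $O_K^\times$), and on each orbit the only $\Gamma$-invariant probability measure is the corresponding Haar measure, $\lambda_0$ on $O_K$ and $\nu_k$ on $\pi^kO_K^\times$. Splitting $\mu_n$ over this finite Borel partition gives $\mu_n=c_0\lambda_0+\sum_{k=n}^{-1}c_k\nu_k$ with $c_k\ge0$ and $\sum c_k=1$, and the problem becomes: minimize $F(c)=\sum_{i,j}c_ic_j\,I_\delta(\sigma_i,\sigma_j)$ over this simplex, where $\sigma_0=\lambda_0$ and $\sigma_k=\nu_k$.

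The second step computes the Gram matrix $M_{ij}=I_\delta(\sigma_i,\sigma_j)$ and minimizes. Off the diagonal the two shells have different absolute values, so Lemma~\ref{lemma:delta-x-neq-y} applies: one gets $I_\delta(\lambda_0,\nu_k)=0$ (the smaller absolute value is $\le1$, so $\delta\equiv1$) and, for distinct negative shells $i,j$, $I_\delta(\nu_i,\nu_j)=\max(i,j)\logabs{\pi}$ (there $\delta\equiv\abs{\pi}^{-\max(i,j)}$). On the diagonal, $\delta(x,y)=\abs{x-y}$ on $O_K$ gives $I_\delta(\lambda_0,\lambda_0)=-\logabs{\pi}/(q-1)$ by Lemma~\ref{lemma:capcity-of-pin-OK}, and $\delta(x,y)=\abs{x-y}\abs{\pi}^{-2k}$ on $\pi^kO_K^\times$ gives $I_\delta(\nu_k,\nu_k)=-\log\gamma_\infty(\pi^kO_K^\times)+2k\logabs{\pi}=k\logabs{\pi}-\tfrac{q}{(q-1)^2}\logabs{\pi}$ by Lemma~\ref{lemma:capcity-of-pin-OK-cross}. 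Since $-\log\delta\ge0$, $F\ge0$, and since $-\log\delta$ is a positive-definite kernel on $\PP^1(K)$ (the energy minimization principle; see \cite{BakerRumelyBook}), $F$ is strictly convex on the simplex, so it is minimized at the unique interior critical point, where $Mc=v\mathbf 1$ and $\mathbf 1^\top c=1$, with $V_\delta(\pi^nO_K)=F(c)=v$. The $O_K$-row of $M$ decouples (its off-diagonal entries vanish), giving $c_0=v(q-1)/L$ where $L:=-\logabs{\pi}$. Writing $m=-n$, $\rho:=q/(q-1)^2$ and reindexing the negative shells by $i=-k\in\{1,\dots,m\}$, the remaining rows read $L(A+\rho I)\tilde c=v\mathbf 1$ with $A=\bigl(\min(i,j)\bigr)_{i,j=1}^m$. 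Multiplying by the tridiagonal matrix $A^{-1}$ converts this to $(I+\rho A^{-1})\tilde c=(v/L)e_1$, whose recursion $\rho x^2-(1+2\rho)x+\rho=0$ has discriminant $1+4\rho=\bigl(\tfrac{q+1}{q-1}\bigr)^2$ and thus roots $q$ and $1/q$; solving with the two boundary rows yields $\tilde c_i=\tfrac vL(\alpha q^i+\beta q^{-i})$ with $\alpha=\bigl(\rho(q^{2m+1}+1)\bigr)^{-1}$ and $\beta=q^{2m+1}\alpha$, and then $c_0+\sum_i\tilde c_i=1$ together with $\rho(q-1)^2=q$ collapses the geometric sums to
\[
v=\frac{L(q^{2m+1}+1)}{q^{2m}(q^2-1)}=-\frac{q+q^{2n}}{q^2-1}\logabs{\pi},
\]
which is the claimed Robin constant; the $c_k$ are then the (manifestly nonnegative) solution of this linear system.

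The main obstacle is this final computation — running the tridiagonal solve and checking the closed form emerges. The two decisive facts are that $1+4\rho$ is a perfect square for $\rho=q/(q-1)^2$, so the recursion has the clean roots $q$ and $1/q$, and that $\rho(q-1)^2=q$, which is exactly what makes the geometric sums telescope to $-\frac{q+q^{2n}}{q^2-1}\logabs{\pi}$ after dividing through by $q^{2m}$. The more delicate conceptual point is the symmetry reduction — correctly identifying the $\Gamma$-orbits and invoking uniqueness of the $\delta$-equilibrium measure over $\PP^1(K)$ — though that is routine once uniqueness is available.
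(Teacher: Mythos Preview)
Your proposal is correct and shares the paper's overall architecture: the $n\ge 0$ case is handled identically, and for $n<0$ both you and the paper reduce by symmetry to a measure of the form $c_0\lambda_0+\sum_{k=n}^{-1}c_k\nu_k$, compute the same pairwise energies $I_\delta(\sigma_i,\sigma_j)$, and arrive at the same linear system $Mc=v\mathbf 1$, $\mathbf 1^\top c=1$.

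The difference is in how the system is characterized and solved. The paper uses the potential-theoretic criterion directly --- the equilibrium measure is the one whose $\delta$-potential is constant on $\pi^nO_K$ --- and then eliminates variables by explicit row operations to isolate $c_0=(q+q^{2n})/(q+1)$, from which $V_\delta=-c_0\logabs{\pi}/(q-1)$ follows in one line. You instead phrase the problem as minimizing the quadratic form $c^\top Mc$ on the simplex, invoke strict convexity to locate the minimizer at the Lagrange critical point, and solve by exploiting that $A^{-1}$ for $A=(\min(i,j))$ is tridiagonal, reducing to a linear recurrence whose characteristic roots are $q$ and $1/q$. Your route is more structural --- it explains \emph{why} the answer is built from powers of $q$ and yields all the $c_k$ explicitly --- while the paper's row manipulation is shorter but somewhat ad hoc. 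One small point: your assertion that the minimum is interior is only justified \emph{a posteriori} once you check the resulting $c_k$ are positive (which they are, as you note); it would be cleaner to state the logic as ``by strict convexity there is a unique minimizer, and since the Lagrange system has a solution with all $c_k>0$, that solution is it.''
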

It is interesting to note that as $n\ra -\infty$,
\[
 V_\delta(\pi^n O_K)\ra V_\delta(\bP^1(K)) = -\frac{q}{q^2-1}\logabs{\pi},
\]
which was computed in \cite[Theorem 1]{F-P-EIOLF}. (Note that $\logabs{\pi}<0$.)
\begin{proof}[Proof of Theorem \ref{thm:main-padic-full}]
 The case of $n\geq 0$ follows immediately from Lemma \ref{lemma:capcity-of-pin-OK} as $\delta(x,y)=\abs{x-y}$ in this case, so the $\delta$-energy reduces to the usual logarithmic energy with respect to infinity. 

 To see that the equilibrium measure $\mu_n$ must be defined as in the theorem statement for some constants $c_0,c_{-1},\ldots, c_n$, we note that the kernel $-\log \delta(x,y)$ is invariant under multiplication by elements $\al\in O_K^\times$, as $\delta(x,y)$ is $\ON{PGL}_2(O_K)$ invariant, and thus by the uniqueness of the equilibrium measure, the measure in each shell $\pi^k O_K^\times$ for $k\in\bZ$ must be a multiple of the Haar measure of $\pi^k O_K^\times$, which we have denoted $\nu_k$. Further, as $O_K$ is invariant by the translation $x\mapsto x+\alpha$, for $\alpha \in O_K$, and $\delta$ is likewise invariant by $O_K$-translations, again by the uniqueness of the equilibrium measure, the measure in $O_K$ must be a multiple of the additive Haar measure of $O_K$, which we have denoted $\lambda_0$. Thus $\mu_n = c_0 \lambda_0 + c_{-1}\nu_{-1}+\cdots + c_n \nu_n$ for some constants $c_k$, $0\geq k\geq n$, and these constants must be nonnegative as the measures are disjointly supported and $\mu_n$ is a probability measure, establishing the first claim.
 
 We will now proceed to determine the values of the constants $c_k$ for which the $\delta$-potential
\[
  U^{\mu_n}_\delta(x) = \int_{\pi^n O_K} -\log \delta(x,y)\,d\mu_n(y)
\]
is constant on $\pi^n O_K$. Note that $\pi^n O_K$ is compact in the Berkovich analytification of the projective line minus the Gauss point. Since the $\delta$-energy here coincides on $K$ with the logarithmic energy with respect to the Gauss point of the ambient Berkovich projective line, it follows (by the same argument as in \cite[Theorem 1(a)]{F-P-EIOLF}) that there is a unique $\delta$-equilibrium measure. Further, this measure is characterized by the fact that the associated $\delta$-potential function is constant quasi-everywhere on $\pi^n O_K$ (see \cite[Theorem 6.18, Corollary 7.21]{BakerRumelyBook}).
 
 By explicitly evaluating the potential $U^{\mu_n}_\delta(x)$ at $x=0,\pi^{-1},\ldots,\pi^n$ and equating these values, we will arrive at the equations determining the $c_k$ coefficients, $k=0,-1,\ldots, n$. (In fact, again using the invariance of our kernel and measure under multiplication $x\mapsto \alpha x$ for $\alpha\in O_K^\times$, it follows that $U^{\mu_n}_\delta$ will be constant on all of $\pi^n O_K$.) As $\delta(0,y)=1$ for $\abs{y}>1$, we see that:
 \begin{gather}\label{eqn:U-at-0}
 \begin{aligned}
  U^{\mu_n}_\delta(0) &= \int_{\pi^n O_K} -\log \delta(0,y)\,d\mu_n(y)\\
  &= \int_{O_K} -\log \delta(0,y)\,d\mu_n(y) = \int_{O_K} -\logabs{0-y}\,d\mu_n(y)\\
  &= c_0 \int_{O_K} -\logabs{0-y}\,d\lambda_0(y)\\
  &= -c_0 \log \gamma_\infty(O_K) = -\frac{c_0}{q-1} \logabs{\pi}.
 \end{aligned}
 \end{gather}
 To compute $U^{\mu_n}_\delta(\pi^k)$ for $k=-1,\ldots, n$, we first evaluate the integral separately on the domains $O_K$, and $\pi^\ell O_K^\times$ for $-1\geq \ell \geq n$, whose disjoint union gives $\pi^n O_K$. First note that by Lemma \ref{lemma:delta-x-neq-y}, $-\log \delta(\pi^k,y) = 0$ for $\abs{y}\leq 1$, so
 \begin{equation}\label{eqn:shell-0}
  \int_{O_K} -\log \delta(\pi^k,y)\,d\mu_n(y) = 0.
 \end{equation}
For the shells $\pi^\ell O_K^\times$ satisfying $0>\ell>k$, we apply Lemma \ref{lemma:delta-x-neq-y} to obtain  $-\log\delta(\pi^k,y) = \logabs{\pi^\ell}$ for every $y\in \pi^\ell O_K^\times$, so that
 \begin{equation}\label{eqn:shell-ell-ge-k}
  \int_{\pi^\ell O_K^\times} -\log \delta(\pi^k,y)\,d\mu_n(y) = c_\ell \int_{\pi^\ell O_K^\times} \logabs{\pi^\ell} \,d\nu_\ell(y) = \ell c_\ell \logabs{\pi}.
 \end{equation}
For $\pi^k O_K^\times$ itself, we compute:
\begin{equation*}
 -\log \delta(\pi^k,y) = 2 \logabs{\pi^k} - \logabs{\pi^k - y},
\end{equation*}
and as $\mu_n|_{\pi^k O_K^\times} = c_k \nu_k$ we obtain:
\begin{gather}\label{eqn:shell-k}
\begin{aligned}
 \int_{\pi^k O_K^\times} -\log \delta(\pi^k,y)\,d\mu_n(y) &= 2 c_k \logabs{\pi^k} - c_k \int_{\pi^k O_K^\times} -\logabs{\pi^k-y}\,d\nu_k(y)\\
 &= 2 c_k \logabs{\pi^k} - c_k \log \gamma_\infty(\pi^k O_K^\times)\\
 &= 2 c_k \logabs{\pi^k} - c_k \logabs{\pi^k} - \frac{c_k q}{(q-1)^2} \logabs{\pi}\\
 &= kc_k \logabs{\pi} - \frac{qc_k }{(q-1)^2} \logabs{\pi}
\end{aligned}
\end{gather}
where we used Lemma \ref{lemma:capcity-of-pin-OK-cross} to evaluate the integral on the right hand side. For the shells $\pi^\ell O_K^\times$ with $0>k>\ell\geq n$, we see that $-\log \delta(\pi^k,y)= \logabs{\pi^k}$, and so
\begin{equation}\label{eqn:shell-ell-le-k}
 \int_{\pi^\ell O_K^\times} -\log \delta(\pi^k,y)\,d\mu_n(y) = c_\ell \int_{\pi^\ell O_K^\times} \logabs{\pi^k} \,d\nu_\ell(y) = k c_\ell \logabs{\pi}.
\end{equation}
Combining equations \eqref{eqn:shell-0}, \eqref{eqn:shell-ell-ge-k}, \eqref{eqn:shell-k}, and \eqref{eqn:shell-ell-le-k} we find that
\begin{equation}\label{eqn:U-pi-k}
 U^{\mu_n}_\delta(\pi^k) = \bigg(-\frac{q c_k}{(q-1)^2} + \sum_{\ell=k}^{-1} \ell c_\ell + \sum_{\ell = n}^{k-1} k c_\ell\bigg) \logabs{\pi}.
\end{equation}
Setting $U^{\mu_n}_\delta(0)=U^{\mu_n}_\delta(\pi^k)$ for $k=-1,\ldots, n$ then gives us $n$ equations which, combined with the condition that the total mass of the measure be $1$, uniquely determine the coefficients $c_{0},\ldots, c_n$ (we cancel the common factors of $\logabs{\pi}$ on each side):
\begin{equation}\label{eqn:row-0}
 c_0 + c_{-1} + \cdots + c_n = 1,
\end{equation}
and 
\begin{equation}\label{eqn:row-k}
\frac{c_0}{q-1} -\frac{q c_k}{(q-1)^2} + \sum_{\ell=k}^{-1} \ell c_\ell + \sum_{\ell = n}^{k-1} k c_\ell=0 \quad \text{for}\quad k=-1,-2,\ldots, n.
\end{equation}
To solve, we first begin by subtracting $1/(q-1)$ times equation \eqref{eqn:row-0} from each equation \eqref{eqn:row-k} to obtain for each $k=-1,-2,\ldots, n$ the new equation:
\begin{equation}\label{eqn:row-k-2}
-\frac{q c_k}{(q-1)^2} + \sum_{\ell=k}^{-1} \left(\ell-\frac{1}{q-1}\right) c_\ell + \sum_{\ell = n}^{k-1} \left(k-\frac{1}{q-1}\right) c_\ell=-\frac{1}{q-1}. 
\end{equation}
Finally, for each $k$ we multiply the above equation \eqref{eqn:row-k-2} by
\[
 \frac{(q-1)^2}{q^{k-2 n+1}}\sum _{i=0}^{-2 (n-k)} (-q)^i
\]
and add all of the resulting equations to \eqref{eqn:row-0} to obtain
\[
 c_0 = 1 - \frac{1}{q} + \frac{1}{q^2} - \cdots + \frac{1}{q^{-2n}} = \frac{q+q^{2n}}{q+1}.
\]
Lastly, as $U^{\mu_n}_\delta$ is constant on all of $\pi^n O_K$ and must equal $V_\delta(\pi^n O_K)$ quasi-everywhere, we can compute the $\delta$-Robin constant by evaluating at $0$ using \eqref{eqn:U-at-0}:
\begin{equation}\label{eqn:delta-energy}
 V_\delta(\pi^n O_K) = U^{\mu_n}_\delta(0) = -\frac{q+q^{2n}}{q^2-1} \logabs{\pi}
\end{equation}
for $n<0$, which is the desired result.
\end{proof}

\bibliographystyle{abbrv}
\bibliography{bib}        

\end{document}